\theoremstyle{plain}
\newtheorem{Theorem}{Theorem}
\newtheorem{Corollary}{Corollary}
\newtheorem{Lemma}{Lemma}
\newcommand	\Pro[1][\Q]		{{\mathbb P}_1(#1)}
\newcommand	\Q		         {\mathbb Q}
\newcommand	{\SL}[1][\Z]	{\sym{SL}(2,#1)}
\newcommand	\Z		         {\mathbb Z}
\newcommand	\cent		      {{\cal C}}
\newcommand	\dime		      {{\dim \jac}}
\newcommand	\e[1]		      {{e^{2\pi i\left({#1}\right)}}}
\newcommand	\gam		      {{\Gamma(4m)}}
\newcommand	\gamone		   {{\Gamma(1)}}
\newcommand	\half		      {{\frac 12}}
\newcommand	\jac		      {S_{k,m}(\Gamma)}
\newcommand	\mat[4]		   {(#1,#2;#3,#4)}
\newcommand	{\skewJ}[3]    {J^+_{#1,#2}(#3)}
\newcommand \sym[1]        {\operatorname{#1}}
\title{%
Memorandum on
Dimension Formulas for Spaces of Jacobi Forms}
\author{%
Nils-Peter Skoruppa}
\date{%
}
\begin{document}
\maketitle
\begin{abstract}
We state ready to compute dimension formulas for the spaces of Jacobi
cusp forms of integral weight $k$ and integral scalar index
$m$ on  subgroups of $\SL$.
\end{abstract}

\section{Introduction}

Denote by $\jac$ the space of Jacobi cusp forms of weight
$k$, index $m$ on a subgroup $\Gamma$ of finite index in $\gamone=\SL$.
In \cite{S-Z1} one finds an explicit trace formula
for Jacobi forms. One of the first applications of such a trace formula is to
calculate the dimensions of the spaces of Jacobi forms.
Although the cited trace
formula is a ``ready to compute'' formula,
it can still be considerably simplified if
one is merely interested in dimensions, i.e.~the trace  of the identity operator.
That this can be done, what one has to do and what the
out-coming formula is looking like, at least qualitatively, is without doubt
known to specialists. Nevertheless, there is no place in the literature where
this has been put down in sufficient generality.
The purpose of the present note
is to fill this gap.
The resulting dimension formulas are summarized in
Theorems~\ref{torsionsfree-gamma-naive} to~\ref{gamma-without-minus-one}.

\section{A first computation}
\label{computation}

We start with the trace formula  as
given in \cite[Theorem 1]{S-Z1}. According to this theorem one has
$$
\dime=\sum_A
I(A)g(A)+\sum_B I(B)g(B).
$$
Here the notation is as follows:
The symbol $\Gamma$ denotes an arbitrary  subgroup of (finite index in)
$\gamone$. In the first sum $A$ runs through a complete
set of representatives for the $\Gamma$-conjugacy classes of all non-parabolic
elements of~$\Gamma$, and in the second sum $B$ runs through a complete set of
representatives of all parabolic elements of $\Gamma$ modulo the equivalence
$\sim$, where $B_1\sim B_2$ if and only if $GB_1$ is $\Gamma$-conjugate to
$B_2$ for some $G\in \cent_{\Gamma\cap\gam}(B_1)$. Here, for any given
matrix $B$, parabolic or not, and any given subgroup
$\Gamma$ of $\gamone$, the symbol $\cent_\Gamma(B)$ stands for the
centralizer of $B$ in $\Gamma$. Moreover, $$I(1)=\left\lbrack \gamone:\Gamma
\right\rbrack\cdot {\frac {2k-3}  {48}},$$ and for parabolic B with positive trace
$$
I(B)=-\half\left\lbrack\cent_\Gamma(B):\cent_{\Gamma\cap\gam}(B)\right\rbrack^{-1}
\cdot\left(1-i\,C\left({\frac r s}\right)\right),
$$
where $r,s$ stand for those
uniquely determined positive integers such that $B$ and
$\cent_{\Gamma\cap\gam}(B)$ are $\gamone$-conjugate to\footnote{%
We use $\mat {a}{b}{c}{d}$ to denote matrices
$\begin{pmatrix}a&b\\c&d\end{pmatrix}$.
}
$\mat {1}{r}{0}{1}$ and
$\langle\mat {1}{s}{0}{1}\rangle$, respectively, and where $C(z)=\cot
(\pi z)$ for $z\not\in\Z,$ and $C(z)=0$ for $z\in\Z.$
For all other~$A$, the expression $I(A)$ is somehow
defined and will be recalled later;
the only important point for the moment is that $I(A)=0$ for non-split
hyperbolic $A$ (i.e. for those $A$ with trace t satisfying
$t^2-4\not=$ square in~$\Q^*$). Finally, $g(1)=2m$, and for a parabolic $B$
which is $\gamone$-conjugate to $\mat {1}{r}{0}{1}$ for some $r$ one has
$$
g(A)=\sum_{\lambda\bmod 2m}\e{\frac {r\lambda^2}  {4m}}.
$$

If
the reader wishes to compare the above formula for the dimensions with the
formula given in \cite[Theorem 1]{S-Z1} he should note that (i)
$\dime =\sym{tr}(H_{k,m,\Gamma}(\Gamma\ltimes\Z^2),S_{k,m}(\Gamma))$ in the
notation of \cite{S-Z1}; (ii)~we have dropped here various subscripts
and parameters: in the notations of \cite{S-Z1} we have
$I(A)=I_{k,m,\Gamma}(A)$, and $g(A)=g_m(\Gamma\ltimes\Z^2,A)$;
(iii)~$g_m(\Gamma\ltimes\Z^2,A)=G_m(A)$,
where the latter expression is given by \cite[Theorem 2]{S-Z1}
(when applying this theorem, note that the quadratic forms
$Q_A$ and $Q_A^\prime$ occurring
in the statement of the theorem are
equivalent modulo $\gamone$ if $A$ and $A^\prime$ are
$\gamone$-conjugate; this implies that $G_m(A)$ depends
only on the $\gamone$-conjugacy class of $A$; in particular $g(B)=g(\mat
{1}{r}{0}{1})$ for the parabolic $B$ as above, and
$Q_{\mat {1}{r}{0}{1}}(\lambda,\mu)=r\lambda^2$).

To be correct it must be added
that the quoted dimension formula holds strictly true only for $k\geq 3$. The given
formula becomes true for arbitrary~$k$ if one subtracts on the left hand side a certain correction term, which
for $k=1,2$, however, turns out to be non-trivial (cf.~\cite[formulas (9), (10) of \S 3]{S-Z1}). In fact, it can be shown that this correction term equals $\skewJ{3-k}m\Gamma$,
where  $\skewJ{3-k}m\Gamma$ denotes the space of
skew-holomorphic Jacobi forms of weight $3-k$, index $m$ on $\Gamma$
(as defined e.g.~in~\cite{S2}).
Thus the given formula holds true for arbitrary $k$ if one
replaces the left hand side by $\dime -
\dim \skewJ{3-k}{m}\Gamma$,
keeping in mind that $\dim \skewJ{3-k}{m}\Gamma=0$  for~$k\ge 3$.
The dimensions of
$\skewJ1k\Gamma$ and $\skewJ2k\Gamma$ can be explicitly calculated for
congruence subgroups $\Gamma$ using the Theorem of Serre and Starck on modular forms of weight 1/2 (cf.~\cite{S1} or~\cite{I-S} for details of the method which has to be applied). Thus, in principle, it would be
possible to give an effective formula for $\dime$ for arbitrary~$k$. However,
for simplicity we concentrate here on the case $k\geq 3$ and leave the
correction terms undetermined for $k\le 2$.

Furthermore, we assume first of all that $\Gamma$ contains nor elliptic matrices neither
the matrices with trace $-2$, i.e. that $\Gamma$ is torsion-free and that the cusps of
$\Gamma$ are all regular, i.e. that any parabolic subgroup of $\Gamma$ is
$\gamone$-conjugate to $\langle\mat {1}{b}{0}{1}\rangle$ for a suitable $b$.
Note that all these assumptions hold for the principal congruence subgroups $\Gamma(N)$ with $N\geq 3$
(as it follows easily from the fact that any elliptic matrix in $\gamone$ is
conjugate to one of the matrices $\pm\mat 0{-1}10$, $\pm\mat0{-1}11$ ore $\pm\mat{-1}{-1}10$).
Under these assumptions only $A=1$ and parabolic $B$
with trace$=2$ contribute to the given formula for $\dime$ (Here one has also to
use that $\gamone$ contains no split hyperbolic matrices, i.e. matrices with
trace$^2-4=$ square in $\Q^*$).

Concerning the parabolic contribution one easily verifies the following:

(i) $\cent_\Gamma(A)=\Gamma_p$ (=stabilizer of $p$ in $\Gamma$) for all
parabolic $A\in\Gamma$ with fixed point $p\in\Pro$;

(ii) for any two parabolic $A$ and $A^\prime$ there exists a matrix
$G\in\cent_\Gamma(A)$ such that $GA$ and $A^\prime$ are
$\gamone$-conjugate if and only if the fixed points $p$ and $p^\prime$ of $A$ and
$A^\prime$ are equivalent modulo $\Gamma$;

(iii) for any two parabolic $A$ and $A^\prime$ having the same fixed point one
has $A\sim A^\prime$ if and only if $A$ and $A^\prime$ lie in the same coset
modulo $\cent_{\Gamma\cap\gam}(A)$.

Taking into account these facts the parabolic contribution can now be written as
$\sum_{p\in\Pro}t_p$ where
$$
t_p=\sum_{A\in\Gamma_p/(\Gamma_p\cap\gam)}I(A)g(A).
$$
To simplify the $t_p$ fix a cusp $p$. Then there exist uniquely
positive integers  $b,f$ such that $\Gamma_p$ and $\Gamma_p\cap\gam$ are
$\gamone$-conjugate to $\langle \mat {1}{b}{0}{1}\rangle$ and $\langle\mat
{1}{bf}{0}{1}\rangle$, respectively. Thus
$$
t_p=\sum_{0<\nu\leq f}I(R\mat
{1}{b\nu}{0}{1}R^{-1})g(R\mat
{1}{b\nu}{0}{1}R^{-1})
$$
with a suitable $R\in\gamone$. Inserting the
quoted values for the functions $I$ and~$g$ one obtains
$$
t_p=-{\frac 1
{2f}}\sum_{0<\nu\leq f}\left(1-i\,C\left({\frac \nu f}\right)\right)
\sum_{\lambda\bmod 2m}\e{\frac {b\nu\lambda^2}  {4m}}.
$$
Now
$f={\frac {4m}  {(4m,b)}}$ (note that $\Gamma_p\cap\gam$ is $\gamone$-conjugate to
$\langle \mat {1}{{[b,4m]}}{0}{1}\rangle$ on the one hand, and to $\langle \mat
{1}{bf}{0}{1}\rangle$, by the definition of $f$, on the other hand; thus $bf=[b,4m]$,
whence $f={\frac {4m}  {(4m,b)}}$). Using this we can write
\begin{equation*}
\begin{split}
t_p=
&-\half\sharp\lbrace\lambda\bmod 2m
\vert b\lambda^2\equiv 0\bmod 4m\rbrace\\
&+{\frac m {f^2}}i\sum_{\nu\bmod f}
C\left({\frac \nu f}\right)
\sum_{\lambda\bmod f}
\e{{\frac {{\frac b {(4m,b)}}\nu\lambda^2} f}}.
\end{split}
\end{equation*}
The first term equals
$-{\frac m f}Q(f)$ where $Q(n)$, for any positive integer $n$, denotes the
greatest integer whose square divides $n$.

To simplify the second term we apply
the following Lemma, which is
Proposition A.2 in \cite{S-Z2}; for the proof the reader
is referred to loc.~cit.. 
\begin{Lemma}
Let $a$ and $f$ be positive integers. Then
$$
{\frac i f}\sum_{\nu\bmod f}C\left({\frac \nu f}\right)\sum_{\lambda\bmod f}
\e{\frac {a\nu\lambda^2}  f}
=-2(a,f)\sum_\Delta\left({\frac \Delta {a/(a,f)}}\right)H(\Delta).
$$
Here the sum on the right side is over all $\Delta<0$ dividing ${\frac f {(a,f)}}$
such that ${\frac f {(a,f)\Delta}}$ is square-free, and $H(\Delta)$ denotes the
Hurwitz class number of $\Delta.$
\end{Lemma}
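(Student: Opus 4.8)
The plan is to first strip off the common factor $d=(a,f)$, reducing to the primitive case $(a,f)=1$, and then to recognise the remaining sum as a twisted cotangent sum that is computed by Dirichlet's class number formula. Write $a=d\alpha$, $f=d\varphi$ with $(\alpha,\varphi)=1$, and abbreviate $G(c,n):=\sum_{\lambda\bmod n}\e{c\lambda^2/n}$. Since $G(c,n)$ depends on $\lambda$ only modulo $n$, an elementary period argument gives $\sum_{\lambda\bmod f}\e{a\nu\lambda^2/f}=d\,G(\alpha\nu,\varphi)$, and this inner sum depends on $\nu$ only modulo $\varphi$. Decomposing the outer sum according to the residue $r=\nu\bmod\varphi$ and applying the cotangent distribution identity $\sum_{s=0}^{d-1}\cot\bigl(\pi(x+\tfrac sd)\bigr)=d\cot(\pi dx)$ (valid with the convention $C=0$ on $\Z$) to the inner sum over $\nu\equiv r$, the two factors $d$ cancel one power of $d$, and one is left with $(a,f)$ times $\frac{i}{\varphi}\sum_{r\bmod\varphi}C(r/\varphi)\,G(\alpha r,\varphi)$. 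This matches the prefactor $-2(a,f)$ on the right and reduces the Lemma to the primitive case, i.e.\ to showing $\frac{i}{\varphi}\sum_{r\bmod\varphi}C(r/\varphi)G(\alpha r,\varphi)=-2\sum_\Delta\bigl(\frac{\Delta}{\alpha}\bigr)H(\Delta)$.

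For the primitive case I would first symmetrise under $r\mapsto\varphi-r$: since $C$ is odd and $G(\alpha(\varphi-r),\varphi)=\overline{G(\alpha r,\varphi)}$ (because $\alpha\lambda^2\in\Z$), the sum collapses to $-\frac1\varphi\sum_r\cot(\pi r/\varphi)\,\mathrm{Im}\,G(\alpha r,\varphi)$, which is manifestly real, as it must be. Next I reduce each $G(\alpha r,\varphi)$ to a primitive Gauss sum: with $e=(r,\varphi)$, $r=e\rho$, $\varphi=e\phi$ one has $G(\alpha r,\varphi)=e\,G(\alpha\rho,\phi)$ with $(\alpha\rho,\phi)=1$ and $C(r/\varphi)=C(\rho/\phi)$. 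Gauss's evaluation of the primitive quadratic Gauss sum then gives $\mathrm{Im}\,G(\alpha\rho,\phi)$ as the Jacobi symbol $\bigl(\frac{\alpha\rho}{\phi}\bigr)$ times $\sqrt\phi$, up to a root of unity depending only on $\phi$ modulo $4$ (resp.\ $8$). Crucially, for odd $\phi$ the imaginary part vanishes unless $\phi\equiv3\pmod4$, i.e.\ unless $-\phi$ is a discriminant $\equiv1\pmod4$; this is exactly the mechanism that annihilates all $\Delta\not\equiv0,1\pmod4$ and thereby restricts the right-hand side to genuine discriminants.

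It then remains to evaluate, for each surviving modulus, the character-twisted cotangent sum $\sum_{(\rho,\phi)=1}\cot(\pi\rho/\phi)\bigl(\frac{\rho}{\phi}\bigr)$. Splitting $\bigl(\frac{\alpha\rho}{\phi}\bigr)=\bigl(\frac{\alpha}{\phi}\bigr)\bigl(\frac{\rho}{\phi}\bigr)$ peels off the factor $\bigl(\frac{\alpha}{\phi}\bigr)$, which by reciprocity is what eventually produces the Kronecker symbol $\bigl(\frac{\Delta}{\alpha}\bigr)$ on the right. The remaining sum is, for $\bigl(\frac{\cdot}{\phi}\bigr)$ the odd real character attached to $\Q(\sqrt{-\phi})$, a finite cotangent expression for $L(1,\chi)$; Dirichlet's formula $L(1,\chi)=\frac{2\pi h}{w\sqrt{|\Delta|}}$ turns it into a rational multiple of the class number, the $\sqrt\phi$ from the Gauss sum cancelling the $\sqrt{|\Delta|}$ here. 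Finally the divisor sum over $e=(r,\varphi)$, together with the prefactor $\frac1\varphi$, reassembles the ordinary (unit-weighted) class numbers into the Hurwitz class numbers $H(\Delta)$, the square-free condition on $f/((a,f)\Delta)$ encoding precisely which square divisors occur.

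I expect the main obstacle to be the $2$-adic bookkeeping: Gauss's evaluation splits into several cases according to $\phi\bmod 8$ and the parity of $\varphi$, the Kronecker symbol must be interpreted correctly at even arguments, and the unit weights $w\in\{2,4,6\}$ have to be matched so that the small discriminants $\Delta=-3,-4$ emerge with their correct factors $\tfrac13,\tfrac12$ in $H(\Delta)$. Keeping the square-divisor sum defining $H(\Delta)$ in exact correspondence with the $e\mid\varphi$ decomposition across all these case distinctions is where the real work lies; the odd primitive case is comparatively transparent.
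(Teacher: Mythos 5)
The paper offers no proof of this Lemma: it is quoted verbatim as Proposition A.2 of \cite{S-Z2} and the proof is explicitly deferred to that reference, so there is no in-paper argument to compare yours against. Judged on its own, your plan follows the classical analytic route (quadratic Gauss sums plus Dirichlet's class number formula), and its preparatory steps are correct and complete: periodicity of the inner sum in $\lambda$ modulo $\varphi=f/(a,f)$ together with the distribution relation $\sum_{s=0}^{d-1}\cot\left(\pi(x+s/d)\right)=d\cot(\pi dx)$ (which is indeed compatible with the convention $C=0$ on $\Z$) produces exactly the prefactor $(a,f)$ and reduces the claim to the coprime case; the symmetrization $r\mapsto\varphi-r$ and the reduction $G(\alpha r,\varphi)=e\,G(\alpha\rho,\phi)$ with $e=(r,\varphi)$, $\phi=\varphi/e$ are also correct, and the resulting factor $e/\varphi=1/\phi$ turns the left side into a sum over divisors $\phi$ of $\varphi$ of primitive twisted cotangent sums. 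A spot check of the normalization (for $a=1$, $f=3$ both sides equal $-\frac23$; for $a=1$, $f=4$ both equal $-1$) confirms you are aiming at the right target.

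The gap is that the proposal stops precisely where the content of the identity lives. Everything specific to the right-hand side --- the restriction to $\Delta\equiv0,1\bmod4$, the square-free condition on $f/((a,f)\Delta)$, the emergence of $\left(\frac{\Delta}{a/(a,f)}\right)$ from $\left(\frac{\alpha}{\phi}\right)$ (a reciprocity step that is delicate for even discriminants), the constant $-2$, and the weights $\frac12$ and $\frac13$ at $\Delta=-4$ and $\Delta=-3$ --- is supposed to come out of the Gauss-sum case analysis modulo $8$, the evaluation of $\sum_{(\rho,\phi)=1}\left(\frac{\rho}{\phi}\right)\cot(\pi\rho/\phi)$ via $L(1,\chi)$, and the reassembly of ordinary class numbers into Hurwitz class numbers. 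You name these steps and candidly defer them, but none is carried out, and the matching of the parametrization by divisors $\phi\mid\varphi$ with the pairs (discriminant $\Delta$, square divisor) implicit in $H(\Delta)$ and in the square-free condition is exactly the bookkeeping most likely to go wrong. As it stands this is a sound, well-aimed roadmap rather than a proof.
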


Recall that $H(\Delta)$ equals the number of
$\gamone$-equivalence classes of all integral, positive definite
binary quadratic
forms of discriminant $\Delta$, counting forms $\gamone$-equivalent to a multiple
of $x^2+y^2$ (resp.~$x^2+xy+y^2$) with multiplicity~$\half$ (resp.~$\frac 13$).
In particular,
$H(\Delta)=0$ if $\Delta\not\equiv 0,1\bmod 4$.

Accordingly to this Lemma the second term of the last formula for $t_p$ can be
written as
$-{\frac {2m}  f}\sum\left({\frac \Delta {b/(4m,b)}}\right)H(\Delta)$ with $\Delta$
running through all negative integers dividing $f$ such that $f/\Delta$ is
square-free.

\section{A special case} 
\label{a-special-case}

Summing up the result of the calculations of the foregoing section,
we have proved
\begin{Theorem}
\label{torsionsfree-gamma-naive}
Let $k$ and $m$ be integers, $m\ge 1$. Let $\Gamma$ be a torsion-free
subgroup of finite index in $\gamone$ which contains no matrices with trace
equal to $-2$. Then the dimension of the space of Jacobi cusp forms of weight $k$,
index $m$ on $\Gamma$ is given by
\begin{multline*}
\dime - \dim\skewJ{3-k}m\Gamma=
m\cdot[\gamone:\Gamma]\,{\frac {2k-3}  {24}}\\
-\sum_p{\frac m {f_p}}Q(f_p)
-\sum_p{\frac {2m} {f_p}}
\sum_{\begin{subarray}{c}
\Delta\vert f_p,\;\Delta<0\\
f_p/\Delta \text{ squarefree}
\end{subarray}}
\left({\frac \Delta { {b_p} / {(4m,b_p)}}}\right)H(\Delta).
\end{multline*}
Here $p$ runs through a set of representatives for $\Gamma\backslash\Pro$, and
for each such~$p$ we use $b_p=\half [\gamone_p:\Gamma_p]$ ,
$f_p={ {4m} / {(4m,b_p)}}$. Moreover, $H(\Delta)$ denotes the Hurwitz class
number (as explained in the last but not least paragraph of section~\ref{computation}), and $Q(n)$, for any
positive integer $n$, denotes the greatest integer whose square divides $n$.
\end{Theorem}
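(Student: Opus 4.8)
The plan is to specialize the general trace formula $\dime=\sum_A I(A)g(A)+\sum_B I(B)g(B)$ of \cite[Theorem~1]{S-Z1} to a $\Gamma$ meeting the two hypotheses and then to collect the surviving contributions. Since the quoted formula holds as stated only for $k\ge 3$, where $\dim\skewJ{3-k}m\Gamma=0$, I would throughout compute the left side $\dime-\dim\skewJ{3-k}m\Gamma$, the correction term being the skew-holomorphic space identified earlier. First I would argue that the hypotheses collapse the double sum to two kinds of terms only. Torsion-freeness removes all elliptic $A$; the absence of matrices of trace $-2$ leaves only trace $+2$ among the parabolics; and since $I(A)=0$ for non-split hyperbolic $A$ while $\gamone$ contains no split hyperbolic matrices at all, every hyperbolic class drops out. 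Hence only $A=1$ and the trace-$2$ parabolics $B$ contribute.

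The identity term is immediate: inserting $I(1)=[\gamone:\Gamma]\,\frac{2k-3}{48}$ and $g(1)=2m$ yields $I(1)g(1)=m\,[\gamone:\Gamma]\,\frac{2k-3}{24}$, the first term of the asserted formula. For the parabolic part I would reorganize the sum over $B$ into a sum over cusps using facts (i)--(iii) recorded above: that $\cent_\Gamma(A)=\Gamma_p$ for a parabolic $A$ fixing $p$; that $GA$ is $\gamone$-conjugate to $A'$ for some $G\in\cent_\Gamma(A)$ exactly when the fixed points of $A$ and $A'$ are $\Gamma$-equivalent; and that, within one cusp, $A\sim A'$ iff $A,A'$ share a coset modulo $\cent_{\Gamma\cap\gam}(A)$. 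These pin down the index set of $\sim$ and turn the parabolic sum into $\sum_p t_p$ with $t_p=\sum_{A\in\Gamma_p/(\Gamma_p\cap\gam)}I(A)g(A)$, where $p$ runs over $\Gamma\backslash\Pro$.

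Next I would evaluate each $t_p$. Choosing $R\in\gamone$ with $\Gamma_p$ and $\Gamma_p\cap\gam$ conjugated to $\langle\mat 1 b 0 1\rangle$ and $\langle\mat 1{bf}0 1\rangle$, and substituting the quoted $I$ and $g$, one gets a finite sum over $0<\nu\le f$ of a cotangent factor times the Gauss sum $\sum_{\lambda\bmod 2m}\e{b\nu\lambda^2/4m}$. Here I must match the conjugation data to the geometric invariants: comparing the two descriptions of $\Gamma_p\cap\gam$ gives $bf=[b,4m]$, hence $f=4m/(4m,b)=f_p$, while comparing $\Gamma_p$ with the full stabilizer $\gamone_p$ gives $b=\half[\gamone_p:\Gamma_p]=b_p$. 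Splitting off the $\nu$-independent part produces the term $-\frac mf Q(f)$, and the residual cotangent-weighted sum is evaluated by the cited Lemma (Proposition~A.2 of \cite{S-Z2}) into $-\frac{2m}{f}\sum_{\Delta}\left(\frac{\Delta}{b/(4m,b)}\right)H(\Delta)$, with $\Delta$ ranging over the stated divisors of $f$. Summing the $t_p$ over $\Gamma\backslash\Pro$ and restoring the subscript $p$ reproduces the two cusp sums of the theorem.

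The main obstacle is the bookkeeping of the second step rather than any single computation: one must pass correctly from the raw index set of $\Gamma$-conjugacy classes of parabolics, through the equivalence $\sim$, to the clean index set $\Gamma\backslash\Pro$ together with the single coset sum over $\Gamma_p/(\Gamma_p\cap\gam)$, and then match the conjugation constants to $b_p$ and $f_p$. The deepest analytic input, the evaluation of the cotangent-weighted Gauss sum, is black-boxed by the cited Lemma; once $t_p$ is brought to that shape, the remainder is assembly.
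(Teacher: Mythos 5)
Your proposal is correct and follows essentially the same route as the paper: the theorem is precisely the summary of the computation in Section~\ref{computation}, namely reducing the trace formula to the identity and trace-$2$ parabolic terms, reorganizing the parabolic sum over cusps via facts (i)--(iii), identifying $b=b_p$ and $f=f_p=4m/(4m,b_p)$, and evaluating the cotangent-weighted Gauss sum by the quoted Lemma (Proposition A.2 of \cite{S-Z2}). No gaps.
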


Note that this dimensions formula becomes even simpler if $\Gamma$ is
normal in $\gamone$ since then the numbers $b_p,f_p$ do not depend on $p$, and
are all equal to, say, $b:=\half[\gamone_\infty:\Gamma_\infty]$,
$f:={{4m} / {(4m,b)}}$. The sums over $p$ in the theorem can then simply be
replaced by $\sharp \Gamma\backslash\Pro$, which equals
${ {[\gamone:\Gamma]} / {2b}}$. In particular, for the group $\Gamma(N)$,
where $b=N$, we find

\begin{Corollary} Let $N,k,m$ be positive integers, $N,k\geq 3.$
Then
\begin{multline*}
\dim S_{k,m}\left(\Gamma(N)\right)=\\
\varphi(N)\psi(N)\left(
mN{\frac {2k-3}  {24}}-
{\frac {d}  8}Q\left({\frac {4m}  {d}}\right)-
{\frac d 4}\sum_\Delta
\left({\frac \Delta {N / {d}}}\right)H(\Delta)
\right).
\end{multline*}
Here $d=(4m,N)$, and $\Delta$ runs through all negative integers dividing
${ {4m} / {d}}$ such that ${ {4m} / {d\Delta}}$ is square-free.
Moreover, $\varphi(N)$ denotes the Euler phi-function, and
$\psi(N)=\#\Pro[\Z/N\Z]=N\prod_{p\vert N}\left(1+{\frac 1 p}\right).$
\end{Corollary}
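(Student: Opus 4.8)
The plan is to obtain the Corollary as a direct specialization of Theorem~\ref{torsionsfree-gamma-naive} to $\Gamma=\Gamma(N)$, so the work amounts to checking the hypotheses, computing two group indices, and carrying out the rational simplifications. First I would record that for $N\ge 3$ the group $\Gamma(N)$ is torsion-free with only regular cusps, as already observed in Section~\ref{computation}; hence the Theorem applies. Since $k\ge 3$ we have $\dim\skewJ{3-k}m{\Gamma(N)}=0$, so its left-hand side is simply $\dim S_{k,m}(\Gamma(N))$.

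Next I would exploit the normality of $\Gamma(N)$ in $\gamone$, following the remark preceding the statement: the quantities $b_p$ and $f_p$ are then independent of $p$, equal to $b=\half[\gamone_\infty:\Gamma(N)_\infty]$ and $f=4m/(4m,b)$, and the sums over $p$ collapse to $\sharp\,\Gamma(N)\backslash\Pro$ times a single summand. The first index I would compute from $\gamone_\infty=\langle\mat1101,-I\rangle$ and $\Gamma(N)_\infty=\langle\mat1N01\rangle$, using $-I\notin\Gamma(N)$ for $N\ge 3$; the coset representatives $\pm\mat1j01$ with $0\le j<N$ show $[\gamone_\infty:\Gamma(N)_\infty]=2N$, so $b=N$ and $f=4m/d$ with $d=(4m,N)$.

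For the cusp count I would use $\sharp\,\Gamma(N)\backslash\Pro=[\gamone:\Gamma(N)]/(2b)$ together with the standard index $[\gamone:\Gamma(N)]=N^3\prod_{p\mid N}(1-p^{-2})$. Rewriting the product as $N\cdot N^2\prod_{p\mid N}(1-p^{-1})(1+p^{-1})=N\,\varphi(N)\psi(N)$ identifies the index with $N\varphi(N)\psi(N)$, whence the number of cusps is $\varphi(N)\psi(N)/2$. Substituting $b=N$, $f=4m/d$ and this cusp count into the three terms of the Theorem, factoring out $\varphi(N)\psi(N)$, and applying the elementary identities $\frac{m}{2f}=\frac d8$ and $\frac{m}{f}=\frac d4$ turns the three contributions into $mN\frac{2k-3}{24}$, $-\frac d8\,Q(4m/d)$, and $-\frac d4\sum_\Delta\left(\frac{\Delta}{N/d}\right)H(\Delta)$; here the relation $b/(4m,b)=N/d$ produces the lower entry of the Kronecker symbol, while the condition $\Delta\mid f$ with $f/\Delta$ square-free becomes ``$\Delta\mid 4m/d$ with $4m/(d\Delta)$ square-free.''

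I expect no genuine obstacle, since the argument is a plain specialization. The only points demanding care are the two index computations --- in particular keeping track of the factor $\pm I$ so that $[\gamone_\infty:\Gamma(N)_\infty]=2N$ rather than $N$, and matching the product formula for $[\gamone:\Gamma(N)]$ against $N\varphi(N)\psi(N)$ --- together with the bookkeeping of the rational simplifications.
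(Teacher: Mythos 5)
Your proposal is correct and follows essentially the same route as the paper, which obtains the Corollary by specializing Theorem~\ref{torsionsfree-gamma-naive} via the remark on normal subgroups (constant $b_p=N$, cusp count $[\gamone:\Gamma(N)]/2N=\varphi(N)\psi(N)/2$). Your index computations and rational simplifications all check out.
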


The simplest instance of this formula occurs for $4m\vert N$ since then the sum
containing the Hurwitz class numbers vanishes. Here, for $k\ge 3$, we obtain
$$
\dim S_{k,m}\left(\Gamma(N)\right)
=m\varphi(N)\psi(N)\left(
N{\frac {2k-3}  {24}}-\half
\right).
$$
This formula was also proved in \cite{K} by considering Jacobi forms as holomorphic sections of certain line bundles, to which the Hirzebruch-Riemann-Roch theorem could be explicitly applied if $4m|N$. 

\section{The general case} 

In this section we compute the dimension formulas
for arbitrary subgroups~$\Gamma$ of $\gamone$. The computations are essentially the same as in
the section~\ref{computation}. However, in view of the various contributions
and cases to consider in the general case, a straightforward calculation would lead
to rather complicated formulas. The main goal of this section is to
state these formulas in a more concise and possibly meaningful way.

To begin with we rewrite
the formula of Theorem~\ref{torsionsfree-gamma-naive}.
To this end we introduce first of all some notation.
As in \cite{S-Z2} we define a function $H_n(\Delta)$ for integers $n\ge1$ and
$\Delta\le 0$. The function $H_1(\Delta)$ equals the Hurwitz class number
$H(\Delta)$, i.e.~$H(0)=-\frac 1{12}$ and
$H(\Delta)$, for $\Delta\not=0$ as recalled in the last but not least
paragraph of section~\ref{computation}.
For general $n\ge 1$
write $(n,\Delta)=a^2b$ with square-free $b$ and set
$$ 
H_n(\Delta)
=
\begin{cases}
a^2b\,\left(\frac{\Delta/a^2b^2}{n/a^2b}\right)\,H_1(\Delta/a^2b^2)
&\text{ if }a^2b^2|\Delta,\\
0 &\text{ otherwise}.
\end{cases}
$$
Furthermore, for integers $k\ge 2$, we define the polynomial\footnote{%
These are, up to a scaling of the argument and a shift in the indices,
the classical Gegenbauer polynomials.
}~$p_k(s)$ as the
coefficient of $x^{k-2}$ in the power series development of
$(1-sx+x^2)^{-1}$. Note that $p_{2k-2}(2)=(2k-3)$ and $p_{2k-2}(0)=(-1)^k$.

Finally, for an exact divisor\footnote{%
i.e.~$n$ and $m/n$ are relatively prime}
$n$ of~$m$ with codivisor $n'=m/n$ and integers
$k\ge 2$, $b\ge 1$ and $t=0,\pm1$ we set\footnote{%
The sum
$s_{k,m,1}^{\text{top}}(n)+s_{k,m,1}^{\text{par.}}(n)
+s_{k,m,-1}^{\text{ell.}}(n)+s_{k,m,0}^{\text{ell.}}(n)
+s_{k,m,1}^{\text{ell.}}(n)$ equals the function~$s_{k,m}(1,n)$
introduced in \cite[Theorem 1]{S-Z2}, which describes the trace of
the Atkin-Lehner operator $W_n$ on the {\it certain space} of modular forms
of level $m$ and weight $2k-2$.
}
\begin{align*}
  s_{k,m;b}^{\text{top}}(n)
  &=
  -          p_{2k-2}(2)  H_{bn'}(0)  
  -\frac 12  Q\left(n'(4n',bn)\right),
  \\
  s_{k,m;b}^{\text{par.}}(n)
  &=
  -\frac12
  (4n,bn')\,
  p_{2k-2}(0)
  \sum_{%
	 \begin{subarray}{c}
		\Delta | 4n/(4n,bn'),\ \Delta < 0\\
		4n/(4n,bn')\Delta\text{ square-free}
	 \end{subarray}
  }
  H_{bn'/(4n,bn')}(\Delta)\\
  s_{k,m;t}^\text{ell.}(n)
  &=
  -\delta\left((t+2)|n\right)\,
  p_{2k-2}\left(\sqrt{t+2}\right)\,
  H_{n'}(t^2-4)
  .
\end{align*}
Here, in the definition of $s_{k,m;b}^{\text{par.}}(n)$,
the sum is over all negative integers~$\Delta$ dividing $4n/(4n,bn')$
such that $4n/(4n,bn')\Delta$ is square-free. Moreover,~$\delta(a|n)$
equals 1 or $0$ accordingly as $a$ divides $n$ or not. Recall
from the previous section that
$Q(n)$, for any
positive integer $n$, denotes the greatest integer whose square divides $n$.

We can now reformulate Theorem~\ref{torsionsfree-gamma-naive}
as follows:

\begin{Theorem}
\label{torsionsfree-gamma}
Let $k$ and $m$ be positive integers, $k\ge2$, and  $\Gamma$ be a
subgroup of finite index in $\gamone$.
Denote by $r$ the number of
cusps of $\Gamma$ and by $b_1$,\dots,~$b_r$ the cusp widths of a complete set
of representatives for the cusps $\Gamma\backslash\Pro$.

If $\Gamma$ is torsion-free and contains no matrices with trace equal to $-2$, then
the dimension of the space of Jacobi cusp forms of weight $k$ and
index $m$ on $\Gamma$ is given by
$$
\dime - \dim\skewJ{3-k}m\Gamma=
\sum_{j=1}^r
\left(
s_{k,m;b_j}^{\text{top}}(1)
+
(-1)^k\,s_{k,m;b_j}^{\text{par.}}(m)
\right)
.
$$
\end{Theorem}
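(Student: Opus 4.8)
The plan is to derive Theorem~\ref{torsionsfree-gamma} from Theorem~\ref{torsionsfree-gamma-naive} by a purely formal, term-by-term matching of the two right-hand sides, after unwinding the definitions of $p_{2k-2}$, of $H_n$, and of the abbreviations $s^{\text{top}}$ and $s^{\text{par.}}$. First I would record the numerical facts quoted in the text, $p_{2k-2}(2)=2k-3$ and $p_{2k-2}(0)=(-1)^k$, together with the evaluation $H_N(0)=-N/12$. The latter comes straight from the definition of $H_n$ applied with $n=N$, $\Delta=0$: since $(N,0)=N$, in the notation of that definition $a^2b=N$, so $N/a^2b=1$ and $\Delta/a^2b^2=0$, giving $H_N(0)=N\,\left(\tfrac{0}{1}\right)H_1(0)=-N/12$.

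For the topological contribution I would show that $\sum_p s^{\text{top}}_{k,m;b_p}(1)$ reproduces the first two sums of Theorem~\ref{torsionsfree-gamma-naive}. With $n=1$, $n'=m$ the evaluation above gives $-p_{2k-2}(2)H_{b_pm}(0)=(2k-3)\tfrac{b_pm}{12}$, and summing over the cusps requires the index--width relation $[\gamone:\Gamma]=2\sum_p b_p$. This is where the hypotheses enter: because $\Gamma$ is torsion-free and contains no matrix of trace $-2$ we have $-1\notin\Gamma$, so $\Gamma$ injects into $\mathrm{PSL}(2,\Z)$, whence $[\gamone:\Gamma]=2\,[\mathrm{PSL}(2,\Z):\bar\Gamma]$; the classical fact that the cusp widths of a finite-index subgroup of $\mathrm{PSL}(2,\Z)$ sum to its index then yields the relation and turns the first summand into $m[\gamone:\Gamma]\tfrac{2k-3}{24}$. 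The second piece of $s^{\text{top}}$ matches the $Q$-sum once one proves the elementary identity $\tfrac{m}{f_p}Q(f_p)=\half Q\!\left(m(4m,b_p)\right)$ with $f_p=4m/(4m,b_p)$; this is multiplicative, and I would verify it prime by prime, the only mildly delicate case being $p=2$ because of the factor $4m$ versus $m$ and the explicit $\half$.

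For the parabolic contribution I would match $(-1)^k s^{\text{par.}}_{k,m;b_p}(m)$ against the Hurwitz-class-number sum. Writing $d=(4m,b_p)$, here $n=m$ and $n'=1$, so the summation range $\Delta\mid 4m/d=f_p$ with $f_p/\Delta$ square-free coincides with that of Theorem~\ref{torsionsfree-gamma-naive}; the two occurrences of $p_{2k-2}(0)=(-1)^k$ cancel the outer prefactor $(-1)^k$, leaving $-\half(4m,b_p)\sum_\Delta H_{b_p/d}(\Delta)$. The key observation is that each subscript $b_p/d$ is coprime to every $\Delta$ in the range: since $d=(4m,b_p)$ we have $(b_p/d,\,4m/d)=1$, and $\Delta\mid 4m/d$ forces $(b_p/d,\Delta)=1$. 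In the definition of $H_n(\Delta)$ this makes the parameters $a=b=1$, so $H_{b_p/d}(\Delta)=\left(\tfrac{\Delta}{b_p/d}\right)H(\Delta)$; using $\half(4m,b_p)=2m/f_p$ the term becomes exactly $-\tfrac{2m}{f_p}\sum_\Delta\left(\tfrac{\Delta}{b_p/(4m,b_p)}\right)H(\Delta)$, as required.

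The whole argument is bookkeeping rather than new mathematics, so I expect the main obstacle to be not any single deep step but keeping all normalizations consistent: the factor-of-two between $\mathrm{SL}(2,\Z)$ and $\mathrm{PSL}(2,\Z)$ in the index--width relation, the $p=2$ case of the $Q$-identity, and the coprimality that collapses $H_n$ to a Kronecker symbol times $H$, so that the three constants $2k-3$, $(-1)^k$, and $-1/12$ all land in their correct places. I would finish by summing the matched top and parabolic contributions over a complete set of cusp representatives, recovering precisely the closed form of Theorem~\ref{torsionsfree-gamma}.
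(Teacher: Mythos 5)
Your proposal is correct and follows essentially the same route as the paper, which deduces Theorem~\ref{torsionsfree-gamma} from Theorem~\ref{torsionsfree-gamma-naive} by exactly this term-by-term unwinding of $s^{\text{top}}$ and $s^{\text{par.}}$, together with the relation $\sum_j b_j=[\gamone:\{\pm1\}\cdot\Gamma]$; your verifications of $H_N(0)=-N/12$, of the $Q$-identity, and of the coprimality collapsing $H_{b/(4m,b)}(\Delta)$ to $\left(\frac{\Delta}{b/(4m,b)}\right)H(\Delta)$ all check out. The paper merely compresses this bookkeeping into a single remark, so your write-up supplies the details it omits rather than a different argument.
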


Recall that the cusp width $b_p$ of a cusp $p$ is by definition
equal to $b_p=[\gamone_p:\{\pm1\}\cdot\Gamma_p]$.
To deduce Theorem~\ref{torsionsfree-gamma}
from Theorem~\ref{torsionsfree-gamma-naive} one merely needs to recall that
for any subgroup $\Gamma$ of $\gamone$,
one has
$\sum_{j=1}^r b_r = [\gamone:\{\pm1\}\cdot\Gamma]$.

The formulation of the dimension formula as in Theorem~\ref{torsionsfree-gamma}
has so far no advantage
over the one given in the preceding section. However,
the usage of the auxiliary
functions $s_{k,m}\dots$ will allow us to rewrite more systematically
the dimension formulas for not necessarily torsion-free
groups $\Gamma$, which we shall discuss now.
More precisely, we shall prove the following formula.

\begin{Theorem}
\label{general-gamma-with-minus-one}
Let $k$ and $m$ be positive integers, $k\ge 2$, and  $\Gamma$ be a
subgroup of finite index in $\gamone$.
Denote by $r$ the number of
cusps of $\Gamma$ and by $b_1$,\dots, $b_r$
the cusp widths of a complete set
of representatives for the cusps $\Gamma\backslash\Pro$,
and let $e(0)$ and $e(-1)=e(+1)$ be the number of $\Gamma$-orbits of the
elliptic fixed points of $\Gamma$ which are $\gamone$-equivalent to $i$ and
$e^{2\pi i/3}$, respectively. 

If $\Gamma$ contains the matrix $-1$, then
the dimension of the space of Jacobi cusp forms of weight $k$ and
index $m$ on $\Gamma$ is given by
\begin{align*}
\dime - \dim\skewJ{3-k}m\Gamma
&=
\sum_{j=1}^r
\frac12
\left(
s_{k,m;b_j}^{\text{top}}(1)
+
(-1)^k\,s_{k,m;b_j}^{\text{top.}}(m)
\right)\\
&+
\sum_{j=1}^r
\frac12\left(
s_{k,m;b_j}^{\text{par.}}(1)
+
(-1)^k\,s_{k,m;b_j}^{\text{par.}}(m)
\right)
\\
&+
\sum_{t=-1}^{+1}
\frac{e(t)}2
\left(
s_{k,m;t}^\text{ell.}(1)+(-1)^ks_{k,m;t}^\text{ell.}(m)
\right)
.
\end{align*}
\end{Theorem}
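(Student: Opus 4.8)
\emph{The plan} is to re-run the computation of Section~\ref{computation}, but now assuming only that $-1\in\Gamma$, and to keep all the conjugacy classes that were excluded there by the torsion-free and regular-cusp hypotheses. Starting again from $\dime=\sum_A I(A)g(A)+\sum_B I(B)g(B)$, the classes that contribute are: the central elements $1$ and $-1$; the elliptic classes, whose fixed points are $\gamone$-equivalent to $i$ (trace $0$) or to $e^{2\pi i/3}$ (traces $\pm1$); and the parabolic classes, now of both trace $+2$ and trace $-2$. Non-split hyperbolic classes contribute $0$ and split hyperbolic ones do not occur in $\gamone$. The organizing device is the involution $A\mapsto -A$, available exactly because $-1\in\Gamma$: it pairs $1$ with $-1$ and each trace-$(+2)$ class with a trace-$(-2)$ one, and under it the index-$m$ dependence of the theta sum $g$ changes in such a way that each geometric contribution splits into a piece carrying the parameter $n=1$ and a piece carrying $n=m$, the two differing by the weight factor $(-1)^k$ coming from $I$ (cf.\ the Atkin--Lehner interpretation of the functions $s_{k,m}(1,n)$ recalled in the footnote and in \cite{S-Z2}). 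As in Section~\ref{computation} the term $\dim\skewJ{3-k}m\Gamma$ is carried along on the left, so that the formula holds for all $k\ge 2$.

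I would first dispose of the central and parabolic classes, whose treatment is \emph{verbatim} that of Section~\ref{computation}. The contribution $I(1)g(1)=[\gamone:\Gamma]\,\frac{m(2k-3)}{24}$ is distributed over the cusps using $\sum_j b_j=[\gamone:\{\pm1\}\Gamma]=[\gamone:\Gamma]$ (an equality valid precisely because $-1\in\Gamma$), which produces the $\tfrac12$-weighted $H_{b_jm}(0)$-parts of the top terms; the class $-1$ supplies the $H_{b_j}(0)$-parts of $s_{k,m;b_j}^{\text{top}}(m)$. For the parabolic classes one forms the sums $t_p$ cusp by cusp exactly as before, now with $-1\in\cent_\Gamma(B)$, which doubles the centralizer index in $I(B)$ and is the source of the extra $\tfrac12$ on the top and par.\ terms. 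Each $t_p$ again splits into a diagonal first term $-\tfrac{m}{f}Q(f)$, feeding the $Q$-parts of the top terms, and a cotangent--theta sum, which the Lemma (Proposition~A.2 of \cite{S-Z2}) turns into the Hurwitz-class-number expressions forming the par.\ terms; the trace-$(+2)$ classes reproduce the argument $m$ exactly as in Theorem~\ref{torsionsfree-gamma}, while the new trace-$(-2)$ classes contribute the argument $1$. Matching the two families to $s_{k,m;b_j}^{\text{top}}$ and $s_{k,m;b_j}^{\text{par.}}$ uses only the two elementary identities (relating $Q(m(4m,b))$ and $\sum H_{b/(4m,b)}(\Delta)$ to their counterparts) that already underlie the passage from Theorem~\ref{torsionsfree-gamma-naive} to Theorem~\ref{torsionsfree-gamma}.

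The genuinely new work is the elliptic contribution. For an orbit of fixed point $\gamone$-equivalent to $i$ the stabilizer $\Gamma_p$ is cyclic of order $4$ and contributes its two non-central elements of trace $0$; for an orbit equivalent to $e^{2\pi i/3}$ the stabilizer is cyclic of order $6$ and contributes elements of trace $+1$ and $-1$. After determining into how many $\Gamma$-conjugacy classes these elements fall (i.e.\ whether a class is fixed by $A\mapsto -A$), I would organize the sum by trace $t\in\{-1,0,1\}$, weighted by the orbit counts $e(t)$, with $e(-1)=e(+1)$ since both traces $\pm1$ come from the $e^{2\pi i/3}$-orbits. For a class of trace $t$ the weight factor in $I(A)$ is the Gegenbauer value $p_{2k-2}(\sqrt{t+2})$ (the value at $2\cos(\theta/2)$ where $2\cos\theta=t$), while $g(A)=G_m(A)$ is evaluated by the quadratic-form/Gauss-sum formula of \cite[Theorem~2]{S-Z1} and identified with $H_{n'}(t^2-4)$; the indicator $\delta((t+2)\mid n)$ encodes exactly when this Gauss sum is nonzero, which is why only $t=-1$ yields a nontrivial $n=1$ term. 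Assembling these pieces gives the stated $\tfrac{e(t)}2\bigl(s_{k,m;t}^{\text{ell.}}(1)+(-1)^k s_{k,m;t}^{\text{ell.}}(m)\bigr)$.

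\emph{The main obstacle} is this last evaluation: computing $G_m(A)$ for elliptic $A$ and matching it, index by index, to $H_n(t^2-4)$ — in particular pinning down the decomposition of the index-$m$ Gauss sum into its $n=1$ and $n=m$ parts and confirming the vanishing pattern recorded by $\delta((t+2)\mid n)$. A secondary, bookkeeping-heavy difficulty is controlling the involution $A\mapsto -A$ on conjugacy classes throughout, so that each elliptic or parabolic orbit is counted once with the correct multiplicity, with the self-paired classes and the central elements (common to every stabilizer) handled without double counting.
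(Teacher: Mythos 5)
Your proposal is correct and follows essentially the same route as the paper: the same decomposition of the trace formula into the central classes $\pm1$, the parabolic classes of trace $\pm2$ (with the extra $\tfrac12$ coming from $[\Gamma_p:\Gamma_p\cap\gam]=2f$ when $-1\in\Gamma$), and the elliptic classes, each matched to the $n=1$ and $n=m$ parts of the $s_{k,m;\ast}$-functions with the $(-1)^k$ carried by $I$. The one step you flag as the main obstacle is handled in the paper by pairing each elliptic $A$ with $A^{-1}$ (rather than with $-A$), using $I(A)=-\overline{I(A^{-1})}$ and the analogous identity for $g$ to reduce $t_A$ to real and imaginary parts, which are then identified with $H_m(t^2-4)$ and $H_1(t^2-4)$ by checking only the three representatives $\mat{0}{-1}{1}{0}$, $\mat{0}{-1}{1}{1}$, $\mat{-1}{-1}{1}{0}$, since $I(A)$ and $g(A)$ depend only on the $\gamone$-conjugacy class.
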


Note that the dimension formula for $S_{k,m}(\Gamma)$, for varying $\Gamma$,
 depends only on
the ``branching scheme'' $b_1$, \dots, $b_r$, $e(0)$, $e(1)$ of $\Gamma$.

\begin{proof}[Proof of Theorem~\ref{general-gamma-with-minus-one}]
In addition to the computation of \S~\ref{computation},
we have first of all to take into account in our general trace formula
the term $I(-1)g(-1)$.
By \cite[Theorem 1, Theorem~2]{S-Z1} this equals
$[\gamone:\Gamma]\,(-1)^{k}\,\frac{2k-3}{24}$. 
In the notation introduced in the beginning of
this section this equals the contribution
of $H_b(0)$ in
$\frac 12 (-1)^k \sum_j s_{k,m;b_j}^{\text{top}}(m)$.
Similarly, the term $I(1)g(1)=m[\gamone:\Gamma](2k-2)/24$ equals
the  contribution
of $H_{bm}(0)$ in
$\frac 12 \sum_j s_{k,m;b_j}^{\text{top}}(1)$. 

Next, let $p$ be a cusp.
Then there is a positive integer $b$ such that
the groups $\Gamma_p$ and $\Gamma_p\cap\gam$ are
$\gamone$-conjugate to
$\langle\pm 1\rangle \times \langle \mat {1}{b}{0}{1}\rangle$
and $\langle\mat {1}{bf}{0}{1}\rangle$ with $f=4m/(4m,b)$, respectively.
Accordingly, we find
$t_p=t_p^+ + t_p^-$, where
$$
t_p^\varepsilon
=\sum_{0<\nu\leq f}
I\left(\varepsilon \mat {1}{b\nu}{0}{1}\right)\,
g\left(\varepsilon \mat {1}{b\nu}{0}{1}\right)
.
$$
Here $t_p^+$ equals one half of the $t_p$ of section~\S~\ref{computation}
(note that in the case considered here $[\Gamma_p:\Gamma_p\cap\Gamma(4m)]=2f$ due to the presence
of $-1$ in $\Gamma$).
Accordingly, $t_p^+$ equals $\frac12(-1)^k\sum_j s_{k,m;n_j}^{\text{par.}}(m)$
plus the contribution of the $Q$-terms in
$\frac12 \sum_j s_{k,m;n_j}^{\text{top}}(1)$.

For the calculation of $t_p^-$, we use
\begin{gather*}
I\left(-(1,b\nu;0,1)\right)
=-\frac1{4f}\,i^{1-2k}\left(1-i\,C(\nu/f)\right),\\
g\left(-(1,b\nu;0,1)\right)
=-i\sum_{\lambda\bmod 2}\e{\frac{mb\nu\lambda^2}4}
\end{gather*}
(cf.~\cite[Theorem~1,~2]{S-Z1}).
By a similar calculation as in section~\S~\ref{computation} we find
$$
t_p^-
=
-\frac {(-1)^k}4 \left(Q((4,bm))+\left(\frac{-4}{bm}\right)H(-4)\right)
.
$$
But this equals the $Q$-term in $\frac 12 (-1)^k s_{k,m;b}^{\text{top}}(m)$
plus $\frac12 s_{k,m;b}^{\text{par.}}(1)$.

If $A=(a,b;c,d)$ is an elliptic matrix in $\Gamma$ with trace $t$,
then by \cite[Theorem~1,~2]{S-Z1},
we have
$$
I(A)=\frac1{|\Gamma_e|}\sym{sign}(c)\,
\frac{\rho^{3/2-k}}{\rho-\overline{\rho}},
\quad
g(A)=-i|t-2|^{-3/2}
\sum_{\lambda,\mu\bmod t-2}\e{\frac m{t-2}Q_A(\lambda,\mu)}
.
$$
Here $\rho$ and $\overline\rho$ are the roots of $x^2-tx+1=0$
such that the imaginary part of $\rho$ and $c$ have the same sign,
$\Gamma_e$ is the stabilizer in $\Gamma$ of the elliptic fixed point
$e$ of $A$ in the upper half plane, and
$Q_A(\lambda,\mu)=b\lambda^2+(d-a)\lambda\mu-c\mu^2$.
Note that
$I(A)=-\overline {I(A^{-1})}$ and that the same identity holds true for $g(A)$.
Thus, $A$ and $A^{-1}$ add the contribution
$$
t_A
=
2\sym{Re}(I(A))\sym{Re}(g(A))-2\sym{Im}(I(A))\sym{Im}(g(A))
$$
to our general trace formula.
One easily verifies
$$
\sym{Re}(I(A))
=
-\frac {p_{2k-2}\left(\sqrt{2+t}\right)}{2|\Gamma_e|\sqrt{2+t}},
\quad
\sym{Im}(I(A))
=
-(-1)^k\frac {p_{2k-2}\left(\sqrt{2-t}\right)}{2|\Gamma_e|\sqrt{2-t}}
$$
(using $\rho=\left(\frac{\sqrt{t+2}+\sqrt{t-2}}2\right)^2$)
and
\begin{align*}
\sym{Re}(g(A))
=
\frac12\delta(2+t=1)\,|\Gamma_e|\,\sqrt{2+t}\,H_m(t^2-4),
\\
\sym{Im}(g(A))
=
-\frac12\delta((2-t)|m)\,|\Gamma_e|\,\sqrt{2-t}\,H_1(t^2-4)
\end{align*}
(by a case by case inspection; note that $I(A)$ and $g(A)$
depend only on the
$\gamone$-conjugacy class of~$A$, thus it suffices to verify
the latter two formulas for $A=\mat{0}{-1}{1}{0}$,
$A=\mat{0}{-1}{1}{1}$ and $A=\mat{-1}{-1}{1}{0}$, respectively).
Hence
$$
t_A=
\frac12\left(
s_{k,m;t}^{\text{ell.}}(1)
+(-1)^k
s_{k,m;-t}^{\text{ell.}}(m)
\right)
.
$$
It is now clear that the contributions of the elliptic matrices
add up to the
term as stated in the theorem.
\end{proof}

We leave it to the reader to verify the last theorem, which describes the
remaining case, i.e.~the case of a $\Gamma$ which does not contain the matrix $-1$
but possibly elliptic fixed points and irregular cusps
(i.e.~cusps $p$ such that $\Gamma_p$ is generated by an element with
negative trace). Here the corresponding dimension formulas run as follows: 
\begin{Theorem}
\label{gamma-without-minus-one}
Let the notations be as in Theorem~\ref{general-gamma-with-minus-one}.
Suppose that $\Gamma$ does not contain the matrix $-1$, and let
$b_1$, \dots,$b_{r_1}$ the cups widths of the regular cusps
and $b_{r_1+1}$, \dots,$b_{r}$ the cups widths of the irregular ones.
Then one has
\begin{align*}
\dime - \dim\skewJ{3-k}m\Gamma
&=
\sum_{j=1}^{r_1}
\left(s_{k,m;b_j}^{\text{top}}(1)+(-1)^ks_{k,m;b_j}^{\text{par.}}(m)\right)
\\
&+\sum_{j=r_1+1}^{r_2}
\frac 12
\left(
s_{k,m;2b_j}^{\text{top}}(1)
+
(-1)^k
s_{k,m;2b_j}^{\text{par.}}(m)
\right)
\\
&+\sum_{j=r_1+1}^{r_2}
\left(
s_{k,m;b_j}^{\text{par}}(1)
+
(-1)^k
s_{k,m;b_j}^{\text{top}}(m)
\right)
\\
&-\sum_{j=r_1+1}^{r_2}
\frac 12
\left(
s_{k,m;2b_j}^{\text{par}}(1)
+
(-1)^k
s_{k,m;2b_j}^{\text{top}}(m)
\right)
\\
&+e(-1)
\left(
s_{k,m;-1}^{\text{ell.}}(1)+(-1)^ks_{k,m;+1}^{\text{ell.}}(m)
\right)
.
\end{align*}
\end{Theorem}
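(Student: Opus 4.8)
The plan is to repeat the trace-formula bookkeeping of the proof of Theorem~\ref{general-gamma-with-minus-one}, adjusted for the three features forced by $-1\notin\Gamma$: the identity contributes only $I(1)g(1)$ (there is no $I(-1)g(-1)$); the cusps split into regular ones, with $\Gamma_p$ $\gamone$-conjugate to $\langle\mat1b01\rangle$, and irregular ones, with $\Gamma_p$ conjugate to the \emph{cyclic} group $\langle-\mat1b01\rangle$; and every elliptic element of $\Gamma$ has trace $-1$. The last point holds because the $\gamone$-stabilizer of a point equivalent to $i$ is cyclic of order $4$ with $\mat0{-1}10^2=-1$, whose only $(-1)$-free subgroup is trivial, while the stabilizer of a point equivalent to $e^{2\pi i/3}$ is cyclic of order $6$ whose only nontrivial $(-1)$-free subgroup is the order-$3$ group generated by a trace-$(-1)$ element; hence no trace-$0$ and no genuine trace-$(+1)$ elliptic contribution occurs, which is why only $e(-1)$ appears.

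The identity and the regular cusps require no new work. I would distribute $I(1)g(1)=m[\gamone:\Gamma]\,(2k-3)/24$ among the $H(0)$-shares of the $s^{\text{top}}_{k,m;b_j}(1)$-terms and check the normalization against $\sum_{j=1}^{r}b_j=[\gamone:\{\pm1\}\cdot\Gamma]=\half[\gamone:\Gamma]$: each regular cusp absorbs weight $b_j$ and each irregular cusp absorbs $\half\cdot 2b_j=b_j$ through the $\half\,s^{\text{top}}_{k,m;2b_j}(1)$ of its positive-trace parabolics, so the shares sum correctly. For a regular cusp the computation of \S\ref{computation} applies verbatim and yields the $Q$-part of $s^{\text{top}}_{k,m;b_j}(1)$ together with $(-1)^k s^{\text{par.}}_{k,m;b_j}(m)$; combined with its $H(0)$-share this is the first displayed sum.

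The heart of the matter is an irregular cusp $p$, where $\Gamma_p=\langle-\mat1b01\rangle$ ($b=b_j$) and $\Gamma_p\cap\gam=\langle\mat1{bN}01\rangle$ with $N=\sym{lcm}(2,f)$, $f=4m/(4m,b)$; thus $[\Gamma_p:\Gamma_p\cap\gam]=N$ and the $N$ coset representatives alternate in trace. I would split $t_p=\sum_{A}I(A)g(A)$ into its even (positive-trace, the parabolics $\mat1{2b\nu}01$) and odd (negative-trace, the $-\mat1{b\nu}01$) parts. A short parity check gives $N/2=4m/(4m,2b)$, so the even part is exactly a width-$2b$ regular-cusp sum \emph{except} that the centralizer $\cent_\Gamma(\mat1{2b}01)$ is the full $\Gamma_p$, which doubles the index in $I$ and so halves every term; this produces precisely $\half\bigl(s^{\text{top}}_{k,m;2b_j}(1)+(-1)^ks^{\text{par.}}_{k,m;2b_j}(m)\bigr)$, the second sum (its $H(0)$-part again matching the identity share). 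For the odd part I would use the formulas for $I(-\mat1{b\nu}01)$, $g(-\mat1{b\nu}01)$ as in the proof of Theorem~\ref{general-gamma-with-minus-one} and write ``odd $=$ all $-$ even'': the full negative-trace sum with step $b$ is a width-$b$ contribution giving $s^{\text{par.}}_{k,m;b_j}(1)+(-1)^ks^{\text{top}}_{k,m;b_j}(m)$ (the third sum), and subtracting its even, step-$2b$ part, again halved by the doubled centralizer index, gives $-\half\bigl(s^{\text{par.}}_{k,m;2b_j}(1)+(-1)^ks^{\text{top}}_{k,m;2b_j}(m)\bigr)$ (the fourth sum).

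Finally, the elliptic contribution is routine: each order-$3$ fixed point carries the pair $\{A,A^{-1}\}$ of trace-$(-1)$ classes, whose combined contribution is, by the identity $t_A=\half\bigl(s^{\text{ell.}}_{k,m;t}(1)+(-1)^ks^{\text{ell.}}_{k,m;-t}(m)\bigr)$ proved in Theorem~\ref{general-gamma-with-minus-one} (and independent of $|\Gamma_e|$), equal to $\half\bigl(s^{\text{ell.}}_{k,m;-1}(1)+(-1)^ks^{\text{ell.}}_{k,m;+1}(m)\bigr)$; a count over the $e(-1)$ orbits then assembles the elliptic term. I expect the only real obstacle to be the irregular-cusp bookkeeping of the previous paragraph: one must separate the single cyclic quotient $\Gamma_p/(\Gamma_p\cap\gam)$ by trace, keep track of the doubled centralizer index (the source of the factors $\half$) and of the ``all $-$ even'' cancellation (the source of the minus sign and of the interplay between the widths $b_j$ and $2b_j$), and carry the parity cases $f$ even/odd through $s^{\text{top}}$ and $s^{\text{par.}}$ so that everything recombines into the four displayed irregular sums.
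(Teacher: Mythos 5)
The paper gives no proof of this theorem --- it explicitly leaves the verification to the reader --- so the only benchmark is the proof of Theorem~\ref{general-gamma-with-minus-one}, and your plan of redoing that computation with the three modifications forced by $-1\notin\Gamma$ is exactly the intended route. Your cusp bookkeeping is sound in outline: the identification of $\Gamma_p$ with $\langle-\mat 1b01\rangle$ at an irregular cusp, the index $[\Gamma_p:\Gamma_p\cap\gam]=\operatorname{lcm}(2,f)$, the trace-parity splitting, the halving caused by the enlarged centralizer, and the ``all minus even'' device for the negative-trace part all fit together; in particular the $H(0)$-shares of the third and fourth displayed sums cancel against each other, which is consistent with the absence of an $I(-1)g(-1)$ term, and the remaining $H(0)$-shares of the first and second sums add up to $I(1)g(1)$ exactly as you check.

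The genuine gap is in the elliptic term, where your argument produces $\frac{e(-1)}2\bigl(s^{\text{ell.}}_{k,m;-1}(1)+(-1)^ks^{\text{ell.}}_{k,m;+1}(m)\bigr)$ rather than the stated $e(-1)\bigl(\cdots\bigr)$: each of the $e(-1)$ orbits contributes exactly one conjugacy-class pair $\{A,A^{-1}\}$, so with your value $t_A=\half\bigl(s^{\text{ell.}}_{k,m;t}(1)+(-1)^ks^{\text{ell.}}_{k,m;-t}(m)\bigr)$ you are short by a factor of $2$. The culprit is the parenthetical claim that $t_A$ is ``independent of $|\Gamma_e|$''. The cancellation of $|\Gamma_e|$ between $I(A)$ and $g(A)$ in the proof of Theorem~\ref{general-gamma-with-minus-one} is an artifact of how $\sym{Re}(g(A))$ and $\sym{Im}(g(A))$ are written there: $g(A)$ is a Gauss sum depending only on the $\gamone$-conjugacy class of $A$ and not on $\Gamma$ at all, so the factor $|\Gamma_e|$ appearing in those two formulas is really the constant $6$ (the order of the stabilizer of an order-$3$ point when $-1\in\Gamma$). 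Only $I(A)$ genuinely carries $1/|\Gamma_e|$. When $-1\notin\Gamma$ the stabilizer drops to order $3$, so $I(A)$ doubles while $g(A)$ is unchanged, hence $t_A=s^{\text{ell.}}_{k,m;-1}(1)+(-1)^ks^{\text{ell.}}_{k,m;+1}(m)$ without the $\half$, which is precisely what the theorem requires. You need to redo this step with $g(A)$ computed directly (or with $|\Gamma_e|$ in the $g$-formulas frozen at $6$) rather than importing the Theorem~\ref{general-gamma-with-minus-one} identity verbatim.
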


\section{Concluding remarks} 

Theorem~\ref{torsionsfree-gamma-naive} to Theorem~\ref{gamma-without-minus-one}
summarize the dimension formulas for holomorphic Jacobi cusp forms
of arbitrary integral weight $k\ge 2$ and integral index $m\ge1$
on arbitrary subgroups $\Gamma$ of $\gamone$. However,
for the important case $k=2$, we would still have to compute the
term $\dim\skewJ{3-k}m\Gamma$ to obtain an explicit
formula. In principle this computation could be done, however,
this seems to be a rather cumbersome task. In essence, this computation
would reduce to an analysis of the action of $\gamone$
on the space of modular form of weight~$\frac12$. For an example of
this kind of computation the reader is referred to \cite{I-S},
where we proved vanishing results for spaces of (holomorphic) Jacobi
forms of weight~1 on groups $\Gamma_0(l)$.  

The general trace formula of~\cite{S-Z1} admits also to derive explicit
dimension formulas for spaces of Jacobi forms with characters,
like e.g.~for the spaces $S_{k,m}(\Gamma_0(l),\chi)$, where~$\chi$
is a Dirichlet character modulo~$l$. It also admits the derivation of
explicit formulas for the traces of Atkin-Lehner operators $W_n$
(as considered in~\cite{S-Z2} for Jacobi forms on $\gamone$) on spaces of Jacobi forms on general $\Gamma$.
It it the very likely
that the function $s^*_{k,m;b}(n)$
for nontrivial divisors $n$ of $m$ are related to these trace formulas.

It might be interesting to ask for the geometric interpretation
of the decomposition of
the dimension formulas into the $s_{k,m}\dots$-parts.
A clue to this would be the article~\cite{K}.

Finally, it might be interesting to compare the dimension formulas
for Jacobi forms to the dimension formulas for ordinary elliptic
modular forms. For example, the dimension of the space $S_{2k-2}(m)$
of modular cups forms
of weight $2k-2$ on $\Gamma_0(m)$ is given by
$$
\dim S_{2k-2}(m)
=
\sum_{\begin{subarray}{c}m'|m\\ \frac m{m'}\text{ square-free}\end{subarray}}
\left(
s_{k,m',1}^{\text{top}}(1)
+
s_{k,m',1}^{\text{par}}(1)
+
\sum_{t=-1}^{+1}s_{k,m',t}^{\text{ell}}(1)
\right)
$$
(cf.~\cite{S-Z2}). This reflects the existence of a certain
natural subspace of $S_{2k-2}(m)$, whose dimension equals
the term corresponding to $m$, and which, in the cited article, was proved to be Hecke-equivariantly
isomorphic to $S_{k,m}(\gamone)$.
Similar lifting maps exist also for Jacobi forms on proper subgroups
of $\gamone$\footnote{%
However, to our knowledge this has never been worked out in detail
for groups different from $\gamone$.},
 and a comparison of dimension formulas might give a first
clue towards an explicit description of the images of such liftings.
These liftings suggest Hecke-equivariant relations e.g.~between
Jacobi forms of index~1 on $\Gamma_0(l)$
and Jacobi forms of index $l$ and on $\Gamma(1)$. Again, our dimension formulas
may help to pinpoint what exactly one should expect. From our formulas
we find e.g., for primes $p\equiv 1\bmod 12$ and even $k\ge4$,
that the dimension of
$\dim S_{k,1}(\Gamma_0(p))$ equals the dimension of
$S_{k,1}(\gamone) \oplus S_{k,p}(\gamone) \oplus S_{k,p}^+(\gamone)$
(assuming the so far unproved fact\footnote{%
We hope to prove this eventually
in another article.
}
that the dimension of the space of skew-holomorphic cusp forms $S_{k,p}^+(\gamone)$
is given by the same formula as for $S_{k,p}(\gamone)$, but with the $(-1)^k$
replaced by $-(-1)^k$.).


\vskip3em
\begin{flushleft}
Nils-Peter Skoruppa\\
Universit\"at Siegen --- Fachbereich Mathematik\\
Walter-Flex-Stra{\ss}e 3, D-57068 Siegen, Germany\\
www.countnumber.de
\end{flushleft}


\begin{thebibliography}{S-Z1}

\bibitem[I-S]{I-S}
  {\it T.~Ibukiyama and N-P.~Skoruppa,}
  A Vanishing Theorem for Siegel Modular Forms of Weight One,
  to appear in Abh. Math. Sem. Univ. Hamburg~77 (2007)

\bibitem[K]{K}
  {\it J.~Kramer,}
  A geometrical approach to the theory of Jacobi forms,
  Comp.~Math.~79 (1991), 1--19

\bibitem[S-Z1]{S-Z1}
  {\it N-P.~Skoruppa and D.~Zagier,}
  A trace formula for Jacobi forms,
  J.~reine angew.~Math.~393 (1989), 168--198
	
\bibitem[S-Z2]{S-Z2}
  {\it N-P.~Skoruppa and D.~Zagier,}
  Jacobi forms and a certain space of modular forms,
  Invent.~math.~94 (1988), 113-146

\bibitem[S1]{S1}
  {\it N-P.~Skoruppa,}
  \"Uber den Zusammenhang zwischen Jacobiformen
  und Modulformen halbganzen Gewichts,
  Bonner Mathematische Schriften 159, 1985

\bibitem[S2]{S2}
  {\it N-P.~Skoruppa,}
  Binary quadratic forms and the Fourier coefficients
  of elliptic and Jacobi modular forms,
  J.~reine angew.~Math.~411 (1990), 66--95

\end{thebibliography}
\end{document}